\documentclass[10pt,reqno]{article}

\usepackage{amssymb,amsmath,comment}
\usepackage[draft]{graphicx}
\usepackage{amsthm}
\usepackage{tikz}
\usepackage{multicol}
\usepackage[colorlinks=true,
linkcolor=webgreen,
filecolor=webbrown,
citecolor=webgreen]{hyperref}
\usepackage[makeroom]{cancel}
\usepackage{ytableau}
\definecolor{webgreen}{rgb}{0,.5,0}
\definecolor{webbrown}{rgb}{.6,0,0}

\usepackage{mathtools}

\theoremstyle{plain}
\newtheorem{theorem}{Theorem}

\newtheorem{proposition}[theorem]{Proposition}

\theoremstyle{definition}
\newtheorem{definition}[theorem]{Definition}
\newtheorem{example}[theorem]{Example}

\theoremstyle{remark}
\newtheorem{remark}[theorem]{Remark}

\def\0g{{\bold 0}}

\def\Z{\Bbb Z}

\def\s{\smallskip}
\def\m{\medskip}

\def\noi{\noindent}

\let\emptyset\varnothing

\title{\Large Some partition and analytical identities arising from Alladi, Andrews, Gordon bijections}

\date{}

\author{S.\, Capparelli\thanks{Dipartimento di Scienze di Base e Applicate per l'Ingegneria, Sapienza Universit\`a di Roma, via Scarpa 16, 00161 Rome (Italy); stefano.capparelli@uniroma1.it; 0000-0002-7924-1688.}, A.\, Del Fra\thanks{Rome, Italy.}, P.\, Mercuri\thanks{Istituto Nazionale di Alta Matematica ``F. Severi'', Rome (Italy); mercuri.ptr@gmail.com; 0000-0003-4402-6432.}, A.\,Vietri\thanks{Dipartimento di Scienze di Base e Applicate per l'Ingegneria, Sapienza Universit\`a di Roma, via Scarpa 16, 00161 Rome (Italy); andrea.vietri@uniroma1.it; 0000-0002-6064-7987.}}

\begin{document}

\maketitle

\noindent {{\bf Keywords}:  partition identity, Rogers-Ramanujan identity, jagged partition, analytical identity.}\par
\medskip
\noindent {{\bf MSC(2010)}: Primary 11P84; Secondary 05A17, 11P82, 11P83.}

\vskip1cm

\begin{abstract}
In the work \cite{AAG} of 1995, Alladi, Andrews, and Gordon provided a generalization of the two Capparelli identities involving certain classes of integer partitions. Inspired by that contribution, in particular as regards the general setting and the tools the authors employed, we obtain new partition identities by identifying further sets of partitions that can be explicitly put into a one-to-one correspondence by the method described in the 1995 paper. As a further result, although of a different nature, we obtain an analytical identity of Rogers-Ramanujan type, involving generating functions, for a class of partition identities already found in that paper and that generalize the first Capparelli identity and include it as a particular case. To achieve this, we apply the same strategy as Kanade and Russel did in a recent paper. This method relies on the use of jagged partitions that can be seen as a more general kind of integer partitions.
\end{abstract}

\section{Introduction}

In a 1969 paper, \cite{GEA1}, Andrews characterized the type of partition sets that could be set into a bijection using Euler's classical trick to show that partitions of $n$ into distinct parts are as many as partitions of $n$ into odd parts. In particular,
Andrews proved that an identity of Schur (\cite{SCH}) and one of G\"ollnitz (\cite{GOL}) provide examples of ``Euler-pairs''. Inspired by that paper, here we look at one of the identities given in \cite{CAP1}, see also \cite{CAP2}. We study the bijections provided by Alladi, Andrews, and Gordon in \cite{AAG} and we find new sets of partitions that can be set into a bijection using the same approach. For further details on this subject we refer the reader  to \cite{GEA, GEA2,CAP4,CAP3}.

As the starting point of our research we consider the partition identity which was proved in \cite{AAG} and in particular the Concluding Remarks (Section 7), according to which the first Capparelli's identity (see \cite{CAP3}) can be generalized from modulo $3$ to modulo $t$ by means of suitable dilations. In Section~\ref{analytical} we find an analytical identity for the partition identity modulo $t$. This is done using the same method as in \cite{KR} to compute the generating functions of the sum side. In Section~\ref{sdense} we again look back at \cite{AAG}, this time by generalizing the machinery which led the authors to build up the partition identity. In particular, we study a different class of partition identities which are indexed by two coprime integers $s$ and $t$. As better clarified in Section~\ref{sdense}, these two parameters play different roles  and actually generalize the roles of $s=2$ and $t=3$ in \cite{AAG}. In the present setting we obtain a new family of partition identities.

\section{An analytical identity for some Alladi, Andrews, Gordon bijections}\label{analytical}

Let $n$, $t$ be positive integers with $t>2$ and denote by ${\cal C}(n)_t$ the set of partitions of $n$ with distinct parts that are either divisible by $t$ or congruent to $t\pm1\pmod{2t}$. Furthermore, let ${\cal D}(n)_t$ denote the set of partitions of $n$ with distinct parts larger than $1$ that are either divisible by $t$ or congruent to $\pm1$ $\pmod{t}$ and whose difference is at least $t+1$, with the following exception: the difference between two adjacent parts can be smaller than $t+1$ if they are both divisible by $t$ or their sum is divisible by $2t$. Alladi, Andrews, and Gordon proved the following proposition (see \cite{AAG}).

\begin{proposition} \label{prop2xt1}
${\cal C}(n)_t$ e ${\cal D}(n)_t$ have the same cardinality.
\end{proposition}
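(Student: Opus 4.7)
My plan is to prove Proposition~\ref{prop2xt1} bijectively, following the technique developed by Alladi, Andrews, and Gordon in~\cite{AAG}. The structural observation driving the construction is that the parts of any $\mathcal{C}(n)_t$-partition lie in only four residue classes modulo $2t$, namely $\{0,\, t-1,\, t,\, t+1\}$, whereas parts of $\mathcal{D}(n)_t$-partitions additionally admit the two classes $\pm 1 \pmod{2t}$. A $\mathcal{D}$-part in one of these ``extra'' classes has the form $2jt \pm 1$ for some $j \geq 1$, and it satisfies the elementary identity
\[
  2jt + \varepsilon \;=\; (2j-1)\,t \;+\; (t+\varepsilon), \qquad \varepsilon \in \{+1,-1\},
\]
decomposing it as the sum of a multiple of $t$ (with odd multiplier) and a part congruent to $t \pm 1 \pmod{2t}$, both admissible in $\mathcal{C}$.

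I would then construct inverse maps $\Phi : \mathcal{C}(n)_t \to \mathcal{D}(n)_t$ and $\Psi : \mathcal{D}(n)_t \to \mathcal{C}(n)_t$ by iterating this elementary splitting (for $\Psi$) and its reverse merging (for $\Phi$), processing the parts of the input in a fixed order. The map $\Psi$ replaces each bad part $2jt \pm 1$ by its two-term decomposition; the map $\Phi$ scans a $\mathcal{C}$-partition for parts equal to $t+\varepsilon$ and merges each such part with a suitably chosen odd-multiplier multiple $(2j-1)t$ to produce the bad part $2jt + \varepsilon$. The two exceptional clauses of the $\mathcal{D}$-gap rule --- ``both parts divisible by $t$'' and ``sum divisible by $2t$'' --- correspond exactly to the pairs of $\mathcal{C}$-parts that are not touched by this procedure: pairs of consecutive multiples of $t$, and the pair $(t-1,\, t+1)$ (whose sum is $2t$). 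These are preserved intact under the bijection and account for the close-gap configurations allowed in $\mathcal{D}$.

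The main obstacle is that a naive split of $2jt \pm 1$ into $((2j-1)t,\; t \pm 1)$ can collide with a part already present in the $\mathcal{D}$-partition, and dually a naive merge in $\Phi$ can destroy distinctness or create new short gaps elsewhere. To cope with this, the algorithm must cascade: if $(2j-1)t$ is already a part, one instead rewrites $2jt \pm 1 = (2j-3)t + (3t \pm 1)$, and iterates further if $(2j-3)t$ is also present. The proof of correctness is thus an induction over the parts of the input partition, paired with a case analysis verifying the $\mathcal{C}$-distinctness after every split (respectively, the $\mathcal{D}$-gap condition after every merge) and showing that the cascade terminates --- bounded by the size of the initial bad part, since the multiplier of $t$ strictly decreases at each cascade step. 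Sum preservation is immediate at every step, and once $\Phi$ and $\Psi$ are shown to be mutually inverse the asserted equality of cardinalities follows.
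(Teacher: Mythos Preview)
Your proposal has a genuine gap. The split/merge with cascade, as you describe it, is not a well-defined bijection. Take $t=3$ and the partition $(7,5)\in\mathcal{D}(12)_3$ (this is a legal $\mathcal{D}$-partition because $7+5=12\equiv 0\pmod 6$). Both $7$ and $5$ are ``bad'' parts, and your split sends $7\mapsto(4,3)$ and $5\mapsto(3,2)$, producing $(4,3,3,2)$ with a repeated $3$. Your cascade rule rewrites $5=(2j-3)t+(3t-1)$ with $j=1$, i.e.\ $5=-3+8$, which is not a valid decomposition. In the other direction, the unique $\mathcal{C}$-preimage is $(6,4,2)$; your map $\Phi$ looks for parts equal to $t\pm1=2,4$ to merge with an \emph{odd} multiple of $t$, but the only multiple of $3$ present is $6=2\cdot3$, so nothing is merged and $\Phi$ returns $(6,4,2)$, which is not in $\mathcal{D}(12)_3$ since $6-4=2<4$ and neither exception applies.

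The underlying misconception is your claim that the sum-divisible-by-$2t$ exception ``corresponds exactly to the pair $(t-1,t+1)$.'' In fact that clause covers all close pairs $(a,b)$ with $a+b\equiv 0\pmod{2t}$, and these pairs (such as $(7,5)$ above) are precisely where a Glaisher-type local split/merge breaks down. The paper does not prove Proposition~\ref{prop2xt1} directly but quotes it from~\cite{AAG}; however, the genuine AAG mechanism is displayed in full in the proof of Proposition~\ref{propsxts2}. It is not a local split/merge at all: one separates the multiples of $t$ above and below a threshold, takes a $t$-fold \emph{conjugate} of the low block, adds that conjugate row-wise to the non-multiples, and then interleaves the high multiples via a staircase-subtract/reorder/staircase-add step. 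Tracing that algorithm on $(6,4,2)$ gives $\pi_1=(4,2)$, $\pi_4=(6)$, $\pi_4^*=(3,3)$, and $\pi_6=(7,5)$, as required. Any correct bijective proof will need a device of comparable global character.
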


In the present section we find an analytical expression for the above partition identity. The proof of our result is inspired by the argument in \cite{KR}. In accordance with that paper, we provide the following definition.
\begin{definition}
For a fixed positive integer $k$, a \emph{$k$-jagged partition} is a finite sequence $(a_1,\ldots,a_m)$ such that $a_1,\ldots,a_m\in\Z$, $a_1>0$, and $a_{i+1}- a_i\ge -k$, for every $i=1,\ldots,m-1$.
\end{definition}
Clearly, if $k=0$, we obtain the classical partitions written in weakly increasing order, as in \cite{KR}. The set of classical partitions $(b_1,\ldots,b_m)$ is easily seen to be in bijection with the set of $k$-jagged partitions $(a_1,\ldots,a_m)$ by associating $b_i$ to $a_i+(i-1)k$; essentially, we add a \emph{$k$-staircase}. In the present paper, this bijection plays an important role when passing to generating functions. For our purposes, we now consider a special class of $k$-jagged partitions.
\begin{definition}
A $k$-jagged partition is called \emph{strong} if it satisfies the condition $a_{i'}- a_i\ge -k$, for every $i=1,\ldots,m$ and $i'=i+1,\ldots,m$.
\end{definition}
With this definition it is now possible to introduce the key concept we require for our proof.
\begin{definition}
For each positive integer $j$ and strong $k$-jagged partition $\mu$, we define the \emph{maximal block} $\mathcal{M}_j$ corresponding to $j$ in the following way. If there is an element $a_i$ of $\mu$ such that $a_i=j$ and with the property that every element before $a_i$ is smaller than $j$, then $\mathcal{M}_j$ is the maximal subsequence of $\mu$ starting with $a_i$ whose elements belong to the set $\{j,j-1,j-2,\ldots,j-k\}$. If there is no element $a_i$ satisfying the above conditions, then $\mathcal{M}_j$ is the empty set.

\end{definition}
The maximal blocks are in bijection with the positive integers and it is not hard to see that a given strong $k$-jagged partition is exactly the juxtaposition of all its maximal blocks. While it seems difficult to work out the general form of a maximal block, in our context such blocks enjoy some additional properties which make their description easy in order to obtain Theorem \ref{analid}.
\begin{example}
We have that
\[
\mu=(3,5,5,4,5,6,4,3,4,0,-2,5,11)
\]
is a $4$-jagged partition and it is not a strong $4$-jagged partition. However it is a $8$-jagged partition and a strong $8$-jagged partition. If we regard $\mu$ as a strong $8$-jagged partition, we have the following maximal blocks:
\begin{align*}
&\mathcal{M}_3=(3), \\
&\mathcal{M}_5=(5,5,4,5), \\
&\mathcal{M}_6=(6,4,3,4,0,-2,5), \\
&\mathcal{M}_{11}=(11), \\
&\mathcal{M}_j=\varnothing, \quad \text{for } j=1,2,4,7,8,9,10 \text{ and } j\ge 12.
\end{align*}
Adding the $8$-staircase $(0,8,16,24,32,40,48,56,64,72,80,88,96)$ we get the classical partition $(3, 13, 21, 28, 37, 46, 52, 59, 68, 72, 78, 93, 107)$.
\end{example}
In the proof of the following theorem we need some further notation. Let $b$ be a finite subsequence; by $b^*$ we mean a string of either 0 or more contiguous blocks of $b$, and by $b^\bullet$ we mean either the empty string or  $b$ itself. For instance, the notation $(6,4,6,3)^*(6,5)^\bullet$ is compatible with any of the following: $(6,5)$, $(6,4,6,3)$,  $(6,4,6,3,6,4,6,3,6,4,6,3)$, or  $(6,4,6,3,6,4,6,3,6,5)$. But not with $(6,4,6,3,6,5,6,5)$ or  $(6,4,6,3,6,5,6,4,6,3)$.
\begin{theorem}\label{analid}
Let $t$ be an integer greater than $3$ and
\[
Q_t(a,b,c,d)=2ta^2+\frac{t}{2}b^2+tc^2+td^2+2tab+2tac+2tad+tbc+tbd+tcd.
\]
We have
\[
\prod_{n\equiv 0,t-1,t,t+1\!\!\!\!\!\pmod {2t}}\!\!\!\!\!\!\!\!\!\!\!\!\!\!\!\!\!\!\!\!(1+q^n) = \sum_{a=0}^{\infty}\sum_{b=0}^{\infty}\sum_{c=0}^{\infty}\sum_{d=0}^{\infty} \frac{q^{Q_t(a,b,c,d)+\frac{t}{2}b-c+d}}{(q^{2t},q^{2t})_a(q^t,q^t)_b(q^t,q^t)_c(q^t,q^t)_d},
\]
where the left hand side is the generating function of the partitions with distinct parts congruent to $0,t-1,t,t+1\pmod {2t}$, and the right hand side is the generating function of partitions with distinct parts greater than $1$ and congruent to $0,\pm 1 \pmod t$ such that the difference between consecutive parts is at least $t+1$ unless they are both $0 \pmod t$ or their sum is $0 \pmod {2t}$.
\end{theorem}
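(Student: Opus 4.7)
The plan is first to apply Proposition~\ref{prop2xt1}, which rewrites the LHS as $\sum_{n\ge0}|\mathcal D(n)_t|\,q^n$, and then to show that this series admits the fourfold representation on the RHS by a Kanade--Russell-style argument based on the strong jagged partitions defined above.

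The first structural step is to decompose each $\pi\in\mathcal D(n)_t$ into canonical ``atoms''. Since $t>3$, a brief case analysis of the exception clause shows that an adjacency $p<p'$ with $p'-p<t+1$ must be either (a) two consecutive multiples of $t$ (difference exactly $t$, forced by the ``both divisible by $t$'' exception) or (b) a conjugate pair $(jt-1,jt+1)$ for some $j\ge1$ (difference exactly $2$, forced by the ``sum divisible by $2t$'' exception); moreover no close conjugate chain can have length $\ge3$, because for $t>3$ the next admissible part lies at distance $\ge 2t-2>t+1$ from $jt+1$. Hence $\pi$ decomposes uniquely into four types of atoms: $a$ conjugate pairs, $b$ parts divisible by $t$, $c$ parts $\equiv-1\pmod{t}$ not inside any conjugate pair, and $d$ parts $\equiv+1\pmod{t}$ not inside any conjugate pair. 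Each atom carries a ``slot index''---$j$ for a pair (values $jt\pm1$), $k$ for a $b$-atom (value $kt$), $l$ for a $c$-atom (value $lt-1$), $m$ for a $d$-atom (value $mt+1$)---and the original adjacency conditions translate into explicit gap inequalities among these indices.

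Sorting the $a+b+c+d$ atoms in increasing slot order and applying the staircase bijection from Section~\ref{analytical} to the resulting strong jagged partition converts the free ``extra room'' above the minimum configuration into a classical partition whose generating function is a product of four $q$-Pochhammers. One thus obtains that the contribution of $\mathcal D(n)_t$-partitions with fixed $(a,b,c,d)$ equals
\[
\frac{q^{M(a,b,c,d)}}{(q^{2t},q^{2t})_a\,(q^t,q^t)_b\,(q^t,q^t)_c\,(q^t,q^t)_d},
\]
where $M(a,b,c,d)$ denotes the total of the minimal configuration. Summing over $(a,b,c,d)\in\mathbb Z_{\ge0}^4$ then yields the RHS provided one verifies the identity $M(a,b,c,d)=Q_t(a,b,c,d)+\tfrac{t}{2}b-c+d$.

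The principal obstacle is precisely this identity for $M$. The coefficients of $Q_t$ must reflect the slot-gap pattern: the coefficient $2t$ of $a^2$ and of the cross-terms $ab,ac,ad$ encodes that a pair-atom requires slot gap $\ge 2$ with every other atom; the coefficient $\tfrac{t}{2}$ of $b^2$ (strictly smaller than the $t$ attached to $c^2$ and $d^2$) encodes that two $b$-atoms may occupy consecutive slots whereas two $c$-atoms or two $d$-atoms cannot; and the asymmetric mixed coefficients $tbc,tbd,tcd$ together with the linear correction $\tfrac{t}{2}b-c+d$ record the finer asymmetric constraints among singleton types and the $\pm 1$ shifts in their values $lt-1$ and $mt+1$. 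A careful combinatorial bookkeeping, first tested on small cases such as $(a,b,c,d)=(1,1,1,1)$ where $M=14t$ is realized by $(t-1,t+1,3t-1,4t,5t+1)$, or $(0,0,2,1)$ where $M=7t-1$ is realized by $(t-1,2t+1,4t-1)$, should then establish the general $M$-formula and complete the proof.
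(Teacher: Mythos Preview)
Your atom decomposition into $a$ conjugate pairs, $b$ multiples of $t$, $c$ singletons $\equiv-1$, and $d$ singletons $\equiv+1$ is correct and matches the four summation indices in the paper. However, the sentence ``applying the staircase bijection \ldots\ converts the free extra room above the minimum configuration into a classical partition whose generating function is a product of four $q$-Pochhammers'' is asserted, not proved, and this is precisely the crux. The staircase bijection only translates between $\mathcal D$-partitions and strong jagged partitions; it does not by itself explain why the generating function for fixed $(a,b,c,d)$ collapses to a \emph{single} monomial over a product of four Pochhammers. In the original partition the atoms of different types interleave in many orders, and each ordering has its own minimum: already for $(a,b,c,d)=(0,1,1,0)$ the two orderings ($c$-atom before or after the $b$-atom) have different minimal totals, and it is a nontrivial $q$-identity that their contributions add up to a single term with denominator $(q^t,q^t)_1^2$. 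Your heuristic reading of the coefficients of $Q_t$ is suggestive but not a proof, and you explicitly leave the general $M$-formula to ``careful combinatorial bookkeeping'' that is never carried out.

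The paper closes both gaps in one stroke by staying on the jagged side. After subtracting the $t$-staircase it lists the forbidden two-term transitions and determines the maximal blocks explicitly: $\mathcal M_{tj-1}=(tj-1,tj-t+1)^*(tj-1)^\bullet$, $\mathcal M_{tj}=(tj)^*$, and $\mathcal M_{tj+1}=(tj+1)^\bullet$. Because a strong jagged partition is the concatenation of its maximal blocks, the bivariate generating function (with $x$ marking the number of parts) \emph{factors over $j$} and equals
\[
\frac{(-xq^{t-1},q^t)_\infty\,(-xq^{t+1},q^t)_\infty}{(x^2q^t,q^{2t})_\infty\,(xq^t,q^t)_\infty}.
\]
Euler's expansions of the four infinite products give the quadruple sum at once, and re-adding the staircase is the purely algebraic substitution $x^m\mapsto x^mq^{tm(m-1)/2}$ with $m=2a+b+c+d$, which mechanically produces the exponent $Q_t(a,b,c,d)+\tfrac{t}{2}b-c+d$. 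What your sketch is missing is exactly this maximal-block analysis (or an equivalent explicit bijection handling all interleavings), together with the algebraic staircase substitution that renders a separate computation of $M$ unnecessary.
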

\begin{proof}
It is straightforward to see that the infinite product is the generating function of $\mathcal{C}(n)_t$. With Proposition \ref{prop2xt1} in mind, it suffices to show that the quadruple sum is the generating function of $\mathcal{D}(n)_t$. For every positive integer $j$, the configurations that are not allowed in $\mathcal{D}(n)_t$ are the following:

\begin{itemize}
    \item $j,j$;
    \item $tj-1,tj$;
    \item $tj-1,tj+t-1$;
    \item $tj,tj+1$;
    \item $tj,tj+t-1$;
    \item $tj+1,tj+t-1$;
    \item $tj+1,tj+t$;
    \item $tj+1,tj+t+1$.
\end{itemize}
If we subtract a $t$-staircase, we get
\begin{itemize}
    \item $j,j-t$;
    \item $tj-1,tj-t$;
    \item $tj-1,tj-1$;
    \item $tj,tj-t+1$;
    \item $tj,tj-1$;
    \item $tj+1,tj-1$;
    \item $tj+1,tj$;
    \item $tj+1,tj+1$.
\end{itemize}

Hence, the maximal blocks are the following:
\begin{itemize}
    \item $\mathcal{M}_{tj-1}=(tj-1,tj-t+1)^*(tj-1)^{\bullet}$;
    \item $\mathcal{M}_{tj}=(tj)^*$;
    \item $\mathcal{M}_{tj+1}=(tj+1)^{\bullet}$.
\end{itemize}
It follows that the generating function is
\begin{align*}
&\prod_{j=1}^{\infty}\frac{1+xq^{tj-1}}{1-xq^{tj-1}\cdot xq^{tj-t+1}}\prod_{j=1}^{\infty}\frac{1}{1-xq^{tj}}\prod_{j=1}^{\infty}(1+xq^{tj+1})= \\
&=\frac{(-xq^{t-1},q^t)_{\infty}(-xq^{t+1},q^t)_{\infty}}{(x^2q^t,q^{2t})_{\infty}(xq^t,q^t)_{\infty}}= \\
&=\sum_{a=0}^{\infty}\frac{x^{2a}q^{ta}}{(q^{2t},q^{2t})_a}\sum_{b=0}^{\infty}\frac{x^{b}q^{tb}}{(q^t,q^t)_b}\sum_{c=0}^{\infty}\frac{x^{c}q^{(t-1)c}q^{\frac{tc(c-1)}{2}}}{(q^t,q^t)_c}\sum_{d=0}^{\infty}\frac{x^{d}q^{(t+1)d}q^{\frac{td(d-1)}{2}}}{(q^t,q^t)_d}= \\
&=\sum_{a=0}^{\infty}\sum_{b=0}^{\infty}\sum_{c=0}^{\infty}\sum_{d=0}^{\infty}\frac{x^{2a+b+c+d}q^{4a+4b+2c^2+c+2d^2+3d}}{(q^{2t},q^{2t})_a(q^t,q^t)_b(q^t,q^t)_c(q^t,q^t)_d}.
\end{align*}
Now we add a $t$-staircase which, in terms of generating functions, corresponds to the substitution $x^m\mapsto x^mq^{\frac{tm(m-1)}{2}}$. Moreover, since we are not interested in taking account of the number of parts, we set $x=1$ and this gives the thesis.
\end{proof}

\section{Partition identities for the ``$s$-rate\,,\,$t$-stack'' case}\label{sdense}

As mentioned in the Introduction, in the present section we construct an original class of partition identities which are indexed by two coprime integers $s$ and $t$. Given a partition of a positive integer $n$ into distinct parts, we list its parts in decreasing order, as in \cite{AAG}. Fix three positive integers $n, s, t$, with $s, t$ coprime and greater than 1; denote by ${\cal C}(n)_s^t$ the set of partitions of $n$ with distinct parts multiple of $s$ or multiple of $t$. Let $W=\{hs+kt\colon h,k\in \mathbb{N}\}$ and $U=\mathbb{N}-W$, where $\mathbb{N}$ is the set of nonnegative integers. Notice that $U$ is finite because its largest element is $(s-1)(t-1)-1$.\par
Now denote by ${\cal D}(n)_s^t$ the set of partitions of $n$ with distinct parts $d_1,\ldots, d_m$, where the elements $d_{i_1}>d_{i_2}>\cdots >d_{i_p}$ are precisely those not congruent to $0\pmod t$, and with the following conditions which all parts $d_i$ must fulfil.

\begin{description}
\item{D0.}   Setting
\begin{equation}\label{D0}
  \begin{split}
    &f_p=d_{i_p}-(m-i_p)t,\\
    &f_{p-1}=d_{i_{p-1}}-(m-i_{p-1}-1)t,\\
&\qquad \vdots\\
    &f_{p-h}=d_{i_{p-h}}-(m-i_{p-h}-h)t,\\
    &\qquad \vdots\\
        &f_{1}=d_{i_{1}}-(m-i_{1}-p+1)t,
  \end{split}
\end{equation}

we require that $f_p$ be congruent to $0$ or $t\pmod s$, and the same must hold for $f_i-f_{i+1}$, with $i=1,\ldots, p-1$.

\item{D1.} $d_i\in W$.

\item{D2.}  If $d_i\equiv 0\pmod t$, then $d_i>t(m-i)$.

\item{D3.}
If $d_i - d_{i+r}<t+1$ for some positive integer $r$, then at least one of the following conditions must be satisfied:
\begin{description}
\item{I.}  $d_i - d_{i+r}\not\equiv 0 \pmod s$  and $d_i\equiv d_{i+r}\equiv 0 \pmod t$;
\item{II.} $d_i - d_{i+r}=sj$ and $d_i + d_{i+r}\not\equiv\pm sj \pmod{st}$.
\end{description}

\end{description}

Notice that ${\cal C}(n)_2^t$ has a different meaning from ${\cal C}(n)_t$; a similar remark concerns ${\cal D}(n)_2^t$ and ${\cal D}(n)_t$. We are going to prove the following result.

\begin{proposition} \label{propsxts1}
${\cal C}(n)_s^t$ and ${\cal D}(n)_s^t$ have the same cardinality.
\end{proposition}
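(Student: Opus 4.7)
The plan is to construct an explicit bijection $\Phi\colon {\cal C}(n)_s^t \to {\cal D}(n)_s^t$ in the Alladi--Andrews--Gordon style, recognising that condition D0 already encodes the inverse map. Given $\lambda \in {\cal C}(n)_s^t$ with parts written in decreasing order, I would split its parts into those divisible by $t$ and the remaining ``$s$-parts'' (multiples of $s$ but not of $t$). On each $s$-part I would then add the position-dependent multiple of $t$ dictated by the formulas (\ref{D0}); concretely, the $j$-th $s$-part counted from the bottom is raised by $t$ times the number of $t$-multiple parts that lie strictly below it in the rearranged output sequence. The resulting sequence $\Phi(\lambda)$ is the candidate element of ${\cal D}(n)_s^t$.

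The inverse map $\Psi\colon {\cal D}(n)_s^t \to {\cal C}(n)_s^t$ is then read off directly from (\ref{D0}): replace each non-$t$ part $d_{i_j}$ of $\mu \in {\cal D}(n)_s^t$ by the corresponding $f_j$, and leave the $t$-multiples untouched. The D0 congruences force each $f_j$ to be a multiple of $s$, which together with D1 (membership in $W$) shows that every part of $\Psi(\mu)$ is a multiple of $s$ or of $t$. Weight is preserved because the staircase shifts introduced by $\Phi$ telescope against the subtractions defining the $f_j$.

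Three of the four conditions on $\Phi(\lambda)$ are largely formal. Condition D1 is preserved because adding a multiple of $t$ to a multiple of $s$ remains in $W$; D0 holds by construction; and D2 reduces to the observation that the $t$-multiples of $\lambda$, being strictly decreasing and positive, satisfy $d_i \ge (m-i+1)t$ whenever $t \mid d_i$, which transfers to the required inequality in $\Phi(\lambda)$. The substantive content is condition D3 for $\Phi(\lambda)$. A close pair $d_i, d_{i+r}$ of the output with difference $<t+1$ must arise either from two $t$-multiples inherited unchanged from $\lambda$ (in which case distinctness of $\lambda$ together with $\gcd(s,t)=1$ forces clause I), or from a pair involving at least one shifted $s$-part (in which case the controlled residue of the shifted $s$-part modulo $st$ places the sum of the two parts in exactly the exclusion described by clause II).

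The main obstacle I anticipate is the verification of D3 in the forward direction, especially when several parts cluster closely together after the staircase addition. The cleanest route seems to be a case analysis on the divisibility by $t$ of each member of a near-adjacent pair, combined with an induction on the number of parts (or on $n$) that handles elaborate clusters by peeling off the largest part and invoking the statement on the remainder, in the spirit of the iterative sifting procedure of \cite{AAG}. The reverse check that $\Psi(\mu)$ has pairwise distinct parts hinges on the same D3 conditions: a putative collision among the $f_j$, or between an $f_j$ and a $t$-multiple part of $\mu$, is precisely ruled out by clauses I and II, again using $\gcd(s,t)=1$.
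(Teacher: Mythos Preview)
Your map is not weight-preserving, so it cannot be a bijection between ${\cal C}(n)_s^t$ and ${\cal D}(n)_s^t$. In the forward direction you only \emph{add} multiples of $t$ to the $s$-parts while leaving the $t$-multiples fixed; in the reverse direction you only \emph{subtract}. Take $s=2$, $t=7$ and $\pi_3=(101,77,67,56,47,45,17,8,4,2)\in{\cal D}(424)_2^7$. The formulas in D0 give $(f_1,\dots,f_8)=(87,60,47,45,17,8,4,2)$, and together with the ``untouched'' $t$-multiples $77,56$ this is a partition of $403$, not $424$. Moreover $87,47,45,17$ are odd, so your claim that ``the D0 congruences force each $f_j$ to be a multiple of $s$'' is false: D0 only asserts $f_p\equiv 0$ \emph{or} $t\pmod s$ (and likewise for the successive differences), and for $s=2$ this is vacuous. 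Your forward map also immediately violates D2: in the preimage $\pi=(84,70,66,46,40,38,35,14,10,8,7,4,2)$ the part $7$ sits at position $11$ of $m=13$, and $7\not>7\cdot 2$.

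What D0 actually recovers is not $\pi_1$ but $\pi_6=\pi_1+\pi_4^*$: the $s$-parts \emph{after} the small multiples of $t$ (those $\le tp$) have been folded into them via the $t$-fold conjugation. The residues $0$ versus $t$ modulo $s$ in D0 are precisely the bookkeeping that lets one undo this step---each time $f_i-f_{i+1}\equiv t\pmod s$ one lays down an extra block of $t$ boxes, and the column-groups of the resulting diagram are the parts of $\pi_4$. Independently, the large $t$-multiples are \emph{not} fixed either: they are shifted by the staircase subtraction/insertion/re-addition (Steps 6--8), and that shift exactly cancels the shift on the non-$t$-multiples. Your proposal has correctly identified the final interleaving step but has omitted the conjugation that disposes of the small $t$-multiples; without it neither the weight, the number of parts, nor conditions D1--D3 come out right.
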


In order to establish the above proposition we prove a stronger result, namely, Proposition \ref{propsxts2}, for which some more terminology is needed. Denote by ${\cal C}(n;i_1,\ldots,i_{t-1},k)_s^t$ the subset of ${\cal C}(n)_s^t$ whose parts are grouped  according to their congruence class as follows:

\noi
$i_1$ parts congruent to $s \pmod{st}$, $\ldots, i_{t-1}$ parts congruent to $(t-1)s \pmod{st}$, $k$ parts larger than $t(\sum_{j=1}^{t-1}i_j)$ and congruent to $0 \pmod{t}$.
Similarly, set ${\cal D}(n;i_1,\ldots,i_{t-1},k)_s^t$ to be the subset of ${\cal D}(n)_s^t$ having $i_h$ parts congruent to $hs \pmod{t}$, with $1\le h\le t-1$, and $k$ further parts congruent to $0 \pmod{t}$.

\noi

Now we proceed with the proof of the stronger result.

\begin{proposition}\label{propsxts2}
${\cal C}(n;i_1,\ldots,i_{t-1},k)_s^t$ and ${\cal D}(n;i_1,\ldots,i_{t-1},k)_s^t$ have the same cardinality.
\end{proposition}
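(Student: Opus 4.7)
The plan is to build a statistic-preserving bijection
\[
\phi\colon {\cal C}(n;i_1,\ldots,i_{t-1},k)_s^t \longrightarrow {\cal D}(n;i_1,\ldots,i_{t-1},k)_s^t,
\]
generalizing the move-based bijection of Alladi, Andrews, and Gordon~\cite{AAG}. Since $\gcd(s,t)=1$, the residues $s,2s,\ldots,(t-1)s$ modulo $t$ are distinct and nonzero, so the statistics $(i_1,\ldots,i_{t-1},k)$ are unambiguously defined on both sides, and any valid $\phi$ must match these counts class by class.

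A natural way to describe the inverse $\psi=\phi^{-1}$ is suggested by condition D0. Given $(d_1>\cdots>d_m)\in{\cal D}$, I would isolate the $p=\sum_{h=1}^{t-1} i_h$ non-multiples of $t$ at positions $i_1>\cdots>i_p$ and compute the sequence $f_1,\ldots,f_p$ given in~\eqref{D0}: subtracting the $t$-staircase produces the candidate non-$t$-multiple parts for a ${\cal C}$-partition. After reinstating the $k$ parts $\equiv 0\pmod t$ in their original positions, D0 should ensure that the resulting values lie in the right residue classes (multiples of $s$ or of $t$), while D1 and D3 guarantee distinctness and admissibility. I would therefore first verify that $\psi$ is well defined and lands in ${\cal C}(n;i_1,\ldots,i_{t-1},k)_s^t$, then construct $\phi$ explicitly as its inverse.

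To construct $\phi$, I would start from a ${\cal C}$-partition, add a $t$-staircase to the $p$ non-multiples of $t$, and insert them at the appropriate positions among the $k$ top $t$-multiples, applying local moves whenever the resulting sequence violates the admissibility requirements. Conditions D1 (parts in $W$) and D2 (size of the leading $t$-multiples) follow easily from the structure of ${\cal C}$ together with closure of $W$ under addition of $s$ and $t$; D0 is automatic from the way $\phi$ is built. The main obstacle, and where most of the work will go, is verifying D3. I expect to handle it by a case analysis on whether a close pair $d_i,d_{i+r}$ consists of two $t$-multiples (falling into exception I by coprimality of $s$ and $t$), two non-$t$-multiples (falling into exception II via the residue constraint imposed by D0), or is of mixed type, in which case the configuration must be either impossible or repaired by a further local move. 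The coprimality hypothesis $\gcd(s,t)=1$ will be used essentially throughout, both in the arithmetic underlying exceptions I and II and in ensuring that the statistic labels $i_h$ are well defined. Once D3 is verified, showing $\phi$ and $\psi$ are mutually inverse reduces to checking that the $t$-staircase additions and subtractions cancel, which is routine.
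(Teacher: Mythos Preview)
Your sketch misses the central mechanism of the bijection: the handling of the \emph{small} multiples of $t$ on the $\mathcal{C}$-side. A partition in $\mathcal{C}(n;i_1,\ldots,i_{t-1},k)_s^t$ has $p=\sum_h i_h$ parts that are multiples of $s$ but not of $t$, $k$ multiples of $t$ above the threshold $tp$, and in general some further number $r\ge 0$ of multiples of $t$ that are $\le tp$; the corresponding $\mathcal{D}$-partition has only $p+k$ parts. Your forward map staircases the $p$ non-$t$-multiples and merges them with the $k$ top $t$-multiples, but says nothing about the $r$ small ones, so it does not even preserve $n$ when $r>0$. In the paper's bijection these small multiples form a subpartition $\pi_4$, and the key step is to take its $t$-fold conjugate $\pi_4^*$ (a partition into parts that are multiples of $t$, with at most $p$ parts) and \emph{add it partwise} to the $s$-multiples $\pi_1$ before staircasing. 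This absorption step is exactly what condition D0 is recording.

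The same omission shows up in your inverse map: you claim that the $f_i$ of \eqref{D0} are ``the candidate non-$t$-multiple parts for a $\mathcal{C}$-partition'', i.e.\ multiples of $s$. But D0 only says each $f_i$ is $\equiv 0$ or $t\pmod s$, and the differences $f_i-f_{i+1}$ are likewise $\equiv 0$ or $t\pmod s$. The correct reading is that $f_i=a_i+\alpha_i$ with $a_i\equiv 0\pmod s$ and $\alpha_i$ a multiple of $t$; the pattern of residues of $f_p$ and of the successive differences tells you exactly how the $\alpha_i$ step up in units of $t$, which reconstructs $\pi_4^*$ and hence $\pi_4$. Without this decomposition your $\psi$ does not land in $\mathcal{C}(n)_s^t$, and even when it happens to (the case $\pi_4=\emptyset$) it cannot be surjective. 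Once you insert the $\pi_4^*$ step, the remaining moves (subtract a $t$-staircase, slide each $t$-multiple rightward past the larger entries, add the staircase back) are essentially as you outlined, and your case analysis for D3 goes through.
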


\begin{proof}

Given a partition in ${\cal C}(n;i_1,\ldots,i_{t-1},k)_s^t$, we associate to it a partition in ${\cal D}(n;i_1,\ldots,i_{t-1},k)_s^t$, with an algorithm that generalizes the classical case $t=3$ in \cite{AAG}; also our terminology traces back to that paper. Later we show that such a procedure is reversible.

Let $\pi\in{\cal C}(n;i_1,\ldots,i_{t-1},k)_s^t$.

\s\noi
{\bf Step 1)} Setting $p=\sum_{j=1}^{t-1}i_j$, split $\pi$ into the subpartition  $\pi_1=(a_1,\ldots,a_p)$ of those elements not congruent to $0 \pmod {t}$ and the subpartition $\pi_2$ made up of those  elements congruent to $0 \pmod t$.
Notice that two elements  in $\pi_1$ have difference  $sj\le t+1$ only if they have sum not congruent to $\pm sj\pmod{st}$, i.e., they satisfy D3-II.
Indeed, suppose we have two parts $s\alpha, s\beta \in \pi_1$ such that $s\alpha -s\beta=sj$ and $ sj\leq t+1$. Assuming, by contradiction, that $s\alpha+s\beta\equiv\pm sj \pmod{st}$, since $\alpha=\beta+j$ we have either $2s\beta+sj\equiv sj \pmod{st}$ or $2s\beta+sj \equiv -sj \pmod{st}$. In the first case $2\beta\equiv 0 \pmod t$, against the assumptions. In the second, likewise, we have $2\alpha\equiv 0 \pmod t$, once more contradicting the assumption.

\s\noi
{\bf Step 2)} Split $\pi_2$ into the subpartitions $\pi_5$ and $\pi_4$ consisting respectively of those elements larger than $tp$ and those not greater. Set $\pi_5=(b'_1,\ldots,b'_k)$ and $\pi_4=(b''_1,\ldots,b''_r)$.

\s\noi
{\bf Step 3)} We construct the  $t$-fold conjugate of $\pi_4$, in symbols $\pi_4^*$. If $\pi_4=\emptyset$, then $\pi_4^*=\emptyset$. Otherwise, set $b_1''=u_1t, b_2''=u_2t,\ldots, b_r''=u_rt$, with $u_1\geq u_2\geq \ldots \geq u_r$, then $\pi_4^*$ is the partition whose diagram  has $tr$ columns. Specifically,  consider  $u_r$ rows with cardinality $tr$, $u_{r-1}-u_{r}$ rows with cardinality $t(r-1)$,  $\ldots$ , $u_2-u_3$ rows with  cardinality $2t$, $u_1-u_2$ rows with  cardinality $t$. Notice that the columns, taken as blocks of  $t$ columns each, give the elements of $\pi_4$.

 For example, for $t=7$, if $\pi_4=(35,14,7)$, divide by 7 each part thus obtaining  $(5,2,1)$, then form a diagram by using ``blocks'' of  7 squares

\s
\ytableausetup{smalltableaux}
\ydiagram{7}

 \noi  by stacking, respectively,  5 blocks, 2 blocks, 1 block, as follows
  \s

\ydiagram{21,14,7,7,7}

\s\noi thus getting  $\pi_4^*=(21,14,7,7,7)$.

\s\noi
{\bf Step 4)} Let $\alpha_1\ge \alpha_2\ge\cdots\ge \alpha_{u_1}$ where $\alpha_i$ is the cardinality of the $i$-th row of the diagram associated to $\pi_4^*$. Add the partitions $\pi_1$ and $\pi_4^*$ by adding the corresponding parts $a_i+\alpha_i$ for $1\le i\le u_1$ and leaving alone the elements $a_{u_1+1}\ldots a_p$. Notice that this is possible as  $u_1\leq p$, since $b_1''=u_1t\leq tp$.  We thus get a new partition $\pi_6=(a'_1,\ldots,a'_p)$, with $a'_i>a'_{i+1}$, for $i=1,\ldots,p-1$. This operation either leaves the differences between the parts unchanged or increases them by multiples of $t$ so that condition D3-II still holds, as well as condition D0 for the symbols $a'_i$ in place of $f_i$.

\s\noi
{\bf Step 5)} Construct a string $\pi_5/\pi_6$ by juxtaposing, left to right, first the elements of  $\pi_5$, then those of $\pi_6$. In this string $i_h$ parts are congruent to $hs \pmod{t}$, for $h=1,\ldots,t-1$, and $k$ parts congruent to $0 \pmod{t}$, the latter being larger than  $tp$.

\s\noi
{\bf Step 6)} Subtract multiples of $t$ to the elements of $\pi_5/\pi_6$, by obtaining the following new elements:
\begin{align*}
\bar b'_1&=b'_1-(p+k-1)t, \\
\bar b'_2&=b'_2-(p+k-2)t, \\
&\vdots \\
\bar b'_k&=b'_k-pt, \\
\bar a'_1&=a'_1-(p-1)t, \\
\bar a'_2&=a'_2-(p-2)t, \\
&\vdots \\
\bar a'_p&=a'_p-0t.
\end{align*}
Notice that while the elements $\bar b'_i$ remain in a nonincreasing order, this does not necessarily happen for the elements  $\bar a'_i$. To be more precise, the nondecreasing order fails whenever  $a'_i-a'_{i+1}=j<t$. In such a case $\bar a'_{i+1}=\bar a'_i+t-j>\bar a'_{i}$. Moreover the elements  $\bar b'_i$ are all strictly positive since they are above the threshold value $tp$, while the elements  $\bar a'_i$ may be negative.

\s\noi
{\bf Step 7)} Starting from the string  $S_0=(\bar b'_1,\ldots,\bar b'_k,\bar a'_1,\ldots,\bar a'_p)$, we define a recursive algorithm which will lead us to a final string  $S_f$ in $k$ steps. Define the generic $i$-th step, $1\leq i\leq k$. Place  $\bar b'_i$ in the string $S_{i-1}$ in the rightmost possible position so that all the elements to its left are larger than itself. At the end of the process we get the desired string $S_f$.

\s\noi
{\bf Step 8)} Denote by  $c_1,\ldots,c_{p+k}$ the elements of $S_f$. We construct the following elements $d_i$, for $i=1,\ldots, p+k$:
\begin{align*}
&d_1=c_1+(p+k-1)t, \\
&d_2=c_2+(p+k-2)t, \\
&\ \vdots \\
&d_{p+k-1}=c_{p+k-1}+t, \\
&d_{p+k}=c_{p+k}.
\end{align*}
The difference $\Delta$ between  two elements $d_i$ and $d_{i+1}$ that are not congruent to $0\pmod t$ may be less than $t+1$ only if $\Delta$ is a multiple of $s$ and the sum is not congruent to $\pm \Delta \pmod{t}$.

Instead, if $d_i\not\equiv0 \pmod t$ and $d_{i+1}\equiv0 \pmod t$ (or vice versa), we have $d_i=c_i+(p+k-i)t$ and $d_{i+1}=c_{i+1}+(p+k-i-1)t$. Since $c_{i+1} -c_i\ge1$, we deduce that $d_{i+1} -d_i\ge t+1$.

Notice that D0 holds. Indeed, the elements $f_i$ obtained from the elements $d_i$ not congruent to $0\pmod t$, by the formulas \eqref{D0}, coincide with $a'_i$ for which we already observed that they satisfy the requirements in Step 4. Therefore this algorithm transforms a partition $\pi$ in  ${\cal C}(n;i_1,\ldots,i_{t-1},k)_s^t$ into a partition in ${\cal D}(n;i_1,\ldots,i_{t-1},k)_s^t$ that we denote by $\pi_3$ again in accordance with \cite{AAG}.

Finally, we only need to show that this procedure is completely reversible. Given a partition $\tilde\pi$ of $n$ in ${\cal D}(n;i_1,\ldots,i_{t-1},k)_s^t$ it is trivial to trace back the steps up to Step 3. We thus get a  partition of $n$ made of  some elements not congruent to $0\pmod t$ and of some elements congruent to $0 \pmod{t}$.
The set of the first type of elements, consistently with previous notation, we denote by $\pi_6=(a'_1,\ldots,a'_p)$, with $a'_i>a'_{i+1}$, for $i=1,\ldots,p-1$.  The set of the second type of elements we denote by $\pi_5=(b'_1,\ldots,b'_k)$, with $b'_i>b'_{i+1}$, for $i=1,\ldots,k-1$.

Now, check whether  $a_p'$ is congruent to 0 or $t$ modulo $s$. In the first case do nothing. In the second case create a diagram with one row of  $t$ squares. Proceed inductively by creating, in corresponding with each $a_i'$, a row of squares to be stacked on top of the row corresponding to  $a_{i+1}'$, with the same number of squares of the row corresponding to  $a_{i+1}'$ if $a_i'\equiv a_{i+1}' \pmod s$, otherwise add  $t$ new squares to the row. We thus form a  diagram that gives, by using  stacks of $t$ columns, the elements of  $\pi_4$ and, using the rows (corresponding to $\pi_4^*$) the  quantities to be subtracted from the elements of $\pi_6$, in order to obtain  the elements of $\pi_1$. In this fashion, the partition $\pi$ in ${\cal C}(n,i_1,\ldots,i_{t-1},k)_s^t$ is completely reconstructed.
\end{proof}

We conclude this section with some remarks on the particular case $s=2$.

\begin{remark}
If $s=2$, the condition D0 is trivially satisfied.
\end{remark}

\begin{proposition}
Condition D2 is redundant if $s=2$ and $t=3$.
\end{proposition}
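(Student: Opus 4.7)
The plan is to show that, once $s=2$ and $t=3$, condition D2 follows from D1 and D3; condition D0 is vacuous by the preceding remark. Fix a partition satisfying D0, D1, D3 and an index $i$ with $d_i\equiv 0\pmod{3}$; the case $i=m$ is immediate since parts are positive, so assume $i<m$. I want to derive $d_i>3(m-i)$.

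First I would unpack what D3 says in this special case. With $t+1=4$, only the differences $1,2,3$ are constrained. A difference $1$ is impossible: D3-I forces both parts to be divisible by $3$ (hence the difference is a multiple of $3$), and D3-II requires an even difference. A difference $2$ can only come from D3-II with $j=1$; computing $2d_{i+r}+2\pmod 6$ shows the sum condition becomes $d_i\equiv 1$ and $d_{i+r}\equiv 2\pmod{3}$. A difference $3$ is odd, so only D3-I applies and both parts must then be divisible by $3$. Non-adjacent instances of D3 give nothing new: once adjacent differences are at least $2$, any pair at distance $\ge 2$ satisfies $d_i-d_{i+r}\ge 4$ and falls outside the scope of D3.

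Set $\delta_j=d_{i+j-1}-d_{i+j}$ for $1\le j\le m-i$. Since $d_i\equiv 0\pmod{3}$, a step $\delta_1=2$ would require $d_i\equiv 1\pmod 3$, so $\delta_1\ge 3$. For $j\ge 2$, if $\delta_j=2$ then $d_{i+j-1}\equiv 1\pmod 3$, which rules out $\delta_{j-1}\in\{1,2,3\}$: $\delta_{j-1}=2$ would need $d_{i+j-1}\equiv 2\pmod 3$ and $\delta_{j-1}=3$ would need $d_{i+j-1}\equiv 0\pmod 3$. Hence $\delta_{j-1}\ge 4$. I then pair every such $j$ with its predecessor $j-1$; the pairs are disjoint, because two consecutive indices $j,j+1$ cannot both have $\delta=2$ (the argument above applied to $j+1$ would force $\delta_j\ge 4$). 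Each pair contributes at least $2+4=6$, and every remaining $\delta_j$ contributes at least $3$ (being neither $1$ nor $2$). Therefore
\[
d_i-d_m=\sum_{j=1}^{m-i}\delta_j\ge 6\cdot\#\{j:\delta_j=2\}+3\cdot\bigl((m-i)-2\#\{j:\delta_j=2\}\bigr)=3(m-i).
\]
By D1, $d_m\in W$; since $1\notin W$ for $s=2$ and parts are positive, $d_m\ge 2$. Combined with the fact that $d_i$ is a positive multiple of $3$, this yields $d_i\ge 3(m-i)+3>3(m-i)$, which is exactly D2.

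The main obstacle, though purely mechanical, is the careful case analysis of D3 to pin down the residues mod $3$ behind each admissible small difference, together with the verification that these local residue constraints are incompatible whenever two small differences would sit next to each other — this is precisely what makes the pairing disjoint and allows the averaging estimate to deliver $3(m-i)$ on the nose.
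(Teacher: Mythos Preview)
Your proof is correct and takes a genuinely different route from the paper. The paper argues by contradiction: it fixes the largest index $i_0$ where D2 fails, locates the next multiple of $3$ at position $i_0+h$ (or the end of the partition), and shows via a ceiling-based count that at most two of the intervening parts can fit into any interval of length $6$, which is too few to account for the $h-1$ parts actually present; a small case split on the parity of $h$ handles the boundary situation at $d_m$. You instead work directly from the given index $i$ down to $d_m$, analyze the consecutive gaps $\delta_j$, and use a pairing argument: each gap equal to $2$ forces its predecessor to be at least $4$, these pairs are disjoint, and all other gaps are at least $3$, giving $d_i-d_m\ge 3(m-i)$ in one stroke. Your approach avoids the contradiction setup and the parity case split, and is arguably cleaner; the paper's density count, on the other hand, makes the ``two parts per six consecutive integers'' structure more visible and localizes the obstruction between consecutive multiples of $3$.
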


\begin{proof}
By contradiction, assuming that D2 does not hold, let $i_0$ be the largest integer such that $3\mid d_{i_0}\le3(m-i_0)$ and let $d_{i_0+h}$ be the next multiple of $3$ from left to right --- if there is no such multiple, set $h=m-i_0+1$ and define $d_{m+1}=0$. Clearly, $d_{i_0+h}\ge 3(m-i_0-h)+3$. Notice that $h$ must be larger than~$1$. The $h-1$ parts between $d_{i_0}$ and $d_{i_0+h}$ are all congruent to $1$ or $2 \pmod 3$. Now condition
D3 forces the leftmost part to be not larger than $d_{i_0}-4$ and the rightmost to be not smaller than $d_{i_0+h}+4$. Furthermore, using condition D3, it is easy to see that every interval of the form $[\alpha,\alpha+5]$ contains at most $2$ this $h-1$ parts. It follows that there are no more than
$$2\left\lceil\frac{d_{i_0}-4-(d_{i_0+h}+4)+1}{6}\right\rceil$$
parts of this kind. By hypotheses, if $h\le m-i_0$ such a number does not exceed $2\left\lceil\frac{3h-10}{6}\right\rceil$, which is less than $h-1$, a contradiction. In the remaining case, namely if $h=m-i_0+1$, the element $d_m$ might be equal to $2$ but the above argument is still valid, using similar calculations, as long as $h$ is even. Instead, if $h$ is odd (necessarily $h\ge3$), we slightly improve the argument. Since $d_{m-1}\ge4$, counting the $h-2$ parts from $d_{i_0+1}$ to $d_{m-1}$ leads to the following contradiction:
$$2\left\lceil\frac{d_{i_0}-4-4+1}{6}\right\rceil\le2\left\lceil\frac{3(h-1)-7}{6}\right\rceil\le h-3\ .$$
\end{proof}

Notice that for $t$ odd and larger than 3 Condition D2 is necessary.

\section{An example} \label{es 1}

We illustrate an example of the bijection when  $s=2$ and $t=7$. Let $\pi=(84,70,66,46,40,38,35,14,10,8,7,4,2)\in{\cal C}(424)_2^7$.

\m\noi
{\bf Step 1)} Split $\pi$ in
\[
\pi_1=(66,46,40,38,10,8,4,2),\quad \pi_2=(84,70, 35, 14, 7).
\]
So $p=8$ and the threshold is $tp=56$.

\m\noi
{\bf Step 2)} Split $\pi_2$ in
\[
\pi_5=(84,70), \quad \pi_4=(35, 14, 7),
\]
hence $k=2$ and $r=3$.

\m\noi
{\bf Step 3)} $\pi_4=( 35, 14, 7)\mapsto (5,2,1)$

\s
\ytableausetup{smalltableaux}
\ydiagram{21,14,7,7,7}
\m\noi

\noindent which gives the partition $\pi_4^*=( 21, 14, 7, 7 ,7)$.

\m\noi
{\bf Step 4)} $\pi_6=\pi_1+\pi_4^*=(66+21,46+14,40+7,38+7,10+7,8,4,2)=(87,60,47,45,17,8,4,2)$.

\s\noi
{\bf Step 5)} Form the string $\pi_5/\pi_6=(84,70,87,60,47,45,17,8,4,2)$.

\s\noi
{\bf Step 6)} Subtract to the string just obtained multiples of  7 as follows
\begin{equation*}
  \setcounter{MaxMatrixCols}{12}
\begin{matrix}
    84&70&87&60&47&45&17&8&4&2&&-\\
    63&56&49&42&35&28&21&14&7&0&&=\\ \\
    21&14&38&18&12&17&-4&-6&-3&2&&
  \end{matrix}
\end{equation*}

\s\noi
{\bf Step 7)} Starting from the string $S_0$ obtained in the previous step, move 14 as far right as possible thus obtaining
$$
S_1=(21,38,18,14,12,17,-4,-6,-3,2).
$$
Next, move similarly the number  21 obtaining
$$
S_2=(38,21,18,14,12,17,-4,-6,-3,2)=S_f.
$$

\s\noi
{\bf Step 8)} Now we add again to  $S_f$ the string of multiples of 7 as before
\begin{equation*}
 \setcounter{MaxMatrixCols}{12}
\begin{matrix}
38&21&18&14&12&17&-4&-6&-3&2&&+ \\
63&56&49&42&35&28&21&14&7&0&&= \\ \\
101&77&67&56&47&45&17&8&4&2&&
\end{matrix}
\end{equation*}

\noi getting the  partition $\pi_3=(101,77,67,56,47,45,17,8,4,2)\in{\cal D}(424)_2^7$ as desired.

We now check that this process is reversible. Given the partition $\pi_3=(101,77,67,56,47,45,17,8,4,2)$, the steps 8,7,6,5 are easily   reversible and lead to $\pi_5$ and $\pi_6=(84,70,87,60,47,45,17,8,4,2)$. To recover  $\pi_4$ proceed as follows. Note the position where, starting from the right, we find the first odd part.
In our example it is 17 and, correspondingly, we draw a row of 7 squares. Going leftward and ignoring the multiple of 7, we have the sequence $(17,45,47,60,87)$ that is congruent to $(1,1,1,0,1)$ modulo 2. For each element with the same parity of the previous one we draw a row upon the others with the same number of squares than the row below it. For each element with a different parity than the previous one, we draw a row upon the others with 7 squares more than the row below it. This generates the diagram

\s
\ytableausetup{smalltableaux}
\ydiagram{21,14,7,7,7}
\m\noi

\noindent that coincides with the diagram in step 3. In this diagram there are 3 blocks of 7 columns containing 35, 14 and 7 squares respectively. Hence $\pi_4=(35,14,7)$. The rows of the diagram give the string $(21,14,7,7,7)$ which must be subtracted to $(87,60,47,45,17)$ to get $\pi_1$.\par

\vskip1.5cm

\noindent{\bf \Large Acknowledgements}\par
\bigskip
\noindent This manuscript was prepared with the funding support of \emph{Progetti di Ateneo, Sapienza Universit\`a di Roma} and of the research grant ``Ing. Giorgio Schirillo'' of the Istituto Nazionale di Alta Matematica ``F. Severi'', Rome.

\end{document}